\newtheorem{thm}{Theorem}[section]
\newtheorem{lem}[thm]{Lemma}
\newtheorem{prop}[thm]{Proposition}
\newtheorem{rmk}[thm]{Remark}
\newcommand{\R}{{\mathbb{R}}}
\newcommand{\1}{\partial}
\newcommand{\4}{\widetilde}
\begin{document}
\title{Another proof of the local curvature estimate for the Ricci flow}
\author{Shu-Yu Hsu\\
%\thanks{ }\\
Department of Mathematics\\
National Chung Cheng University\\
168 University Road, Min-Hsiung\\
Chia-Yi 621, Taiwan, R.O.C.\\
e-mail: shuyu.sy@gmail.com}
\date{Jan 18, 2018}
\smallbreak \maketitle
\begin{abstract}
By using the De Giorgi iteration method we will give a new simple proof of the recent result of B.~Kotschwar, O.~Munteanu, J.~Wang \cite{KMW} and N.~Sesum \cite{S} on the local boundedness of the Riemannian curvature tensor of solutions of Ricci flow in terms of its inital value on a given ball and a local uniform bound on the Ricci curvature.
\end{abstract}

\vskip 0.2truein

Key words: Ricci flow, local boundedness, Riemmanian curvature, Ricci curvature

AMS 2010 Mathematics Subject Classification: Primary 58J35, 35B45
Secondary 35K10 

\vskip 0.2truein
\setcounter{section}{0}

\section{Introduction}
\setcounter{equation}{0}
\setcounter{thm}{0}

There is a lot of interest on Ricci flow (\cite{CK}, \cite{CLN}, \cite{H2}, \cite{MF}, \cite{MT}) because it is a very powerful tool in the study of the geometry of manifolds. Recently G. Perelman \cite{P1}, \cite{P2}, by using the 
the Ricci flow technique solved the famous Poincare conjecture in geometry. Let $(M,g(t))$, $0<t<T$, be a $n$-dimensional Riemannian manifold. We say that the metric $g(t)=(g_{ij}(t))$ evolves by the Ricci flow if it satisfies
\begin{equation}\label{rf-eqn}
\frac{\1 g_{ij}}{\1 t}=-2R_{ij}
\end{equation} 
on $M\times (0,T)$ where $R_{ij}$ is the Ricci curvature of the metric $g(t)=(g_{ij}(t))$. Short time existence of solution of Ricci flow on compact Riemannian manifolds with any initial metric at $t=0$ was proved by R.~Hamilton in \cite{H1}. Short time existence of solution of Ricci flow on complete non-compact manifolds with bounded curvature initial metric at time $t=0$ was proved by W.X.~Shi in \cite{Sh1}, \cite{Sh2}. When $M$ is a compact manifold, R.~Hamilton \cite{H1} proved that either the Ricci flow solution exists globally or there exists a maximal existence time $0<T<\infty$ for the solution of Ricci flow and 
\begin{equation*}
\lim_{t\nearrow T}|Rm|_{g(t)}=\infty.
\end{equation*}
Hence in order to know whether the solution of Ricci flow can be extended beyond its interval of existence $(0,T)$, it is important to prove boundedness of the Riemannian curvature for the solution of Ricci flow near the time $T$. Uniform boundedness of the Riemannian curvature of the solution of Ricci flow on a compact manifold when the solution has uniform bounded Ricci curvature on $(0,T)$ was proved by N.~Sesum in \cite{S} using a blow-up contradiction argument and G.~Perelman's  noncollapsing result \cite{P1}. Local boundedness of the Riemannian curvature for $\kappa$-noncollapsing solutions of Ricci flow  in term of its local $L^{\frac{n}{2}}$ norm when its local $L^{\frac{n}{2}}$ norm is sufficiently small was also proved by R.~Ye in \cite{Y1}, \cite{Y2}, using Moser iteration technique and the point picking technique of G.~Perelman \cite{P1}. Similar result was also obtained by X~Dai, G~Wei and R~Ye in \cite{DWY}.

Local boundedness of the Riemannian curvature of the solution of Ricci flow in terms of its inital value on a given ball and a local uniform bound on the Ricci curvature was proved by B.~Kotschwar, O.~Munteanu, J.~Wang using Moser iteration technique and results of P.~Li \cite{L} in \cite{KMW}. A similar local Riemannian curvature result was proved recently by C.W.~Chen \cite{C} using the point picking technique of Perelman \cite{P1}, M.T.~Anderson's harmonic coordinates \cite{A} and elliptic regularity results \cite{GT}. In this paper we will use the De Giorgi iteration method to give a new simple proof of this result.

We will assume that $(M,g(t))$ is a smooth solution of the Ricci flow \eqref{rf-eqn} in $[0,T)$ for the rest of the paper. For any $x_0\in M$, $\rho>0$ and $0\le t<T$, we let $B_{g(t)}(x_0,\rho)=\{x\in M:\mbox{dist}_{g(t)}\, (x,x_0)<\rho\}$, $V_{x_0}(\rho,t)=\mbox{vol}_{g(t)}\,(B_{g(0)}(x_0,\rho))$, $V_{x_0}(\rho)=V_{x_0}(\rho,0)$, $|Ric|(x,t)=|Ric(x,t)|_{g(t)}$ and $|Rm|(x,t)=|Rm(x,t)|_{g(t)}$. We let $dv_t$ be the volume element of the metric $g(t)$ and let $C>0$ denote a generic constant that may change from line to line. For any complete Riemannian manifold $(M,g)$, we let $B(x_0,\rho)=\{x\in M:\mbox{dist}_g\, (x,x_0)<\rho\}$, $V_{x_0}(\rho)=\mbox{vol}_g\,(B(x_0,\rho))$ and $dv$ be the volume element of the metric $g$.

Note that by Corollary 13.3 of \cite{H1} or  Lemma 7.4 of \cite{CK},
\begin{equation}\label{rm-curv-eqn1}
\frac{\1}{\1 t}|Rm|^2\le\Delta |Rm|^2-2|\nabla Rm|^2+C|Rm|^3
\end{equation}
in $(0,T)$ for some constant $C>0$ depending only on $n$. Since $|\nabla|Rm||\le |\nabla Rm|$, by \eqref{rm-curv-eqn1},
\begin{equation}\label{rm-curv-eqn2}
\frac{\1}{\1 t}|Rm|\le\Delta |Rm|+C|Rm|^2\quad\mbox{ in }(0,T).
\end{equation}

We will prove the following main result in this paper.

\begin{thm}(cf. Theorem 1 of \cite{KMW})\label{lp-l-infty-thm}
Let $g(t)$, $0\le t<T$, be a smooth solution of Ricci flow on a $n$-dimensional Riemannian manifold $M$.  Suppose there exists $x_0\in M$ and constants $K>0$, $\rho>0$, such that
\begin{equation}\label{ric-curv-bd-cond}
|Ric|\le K\quad\mbox{ in }B_{g(0)}\left(x_0,\frac{2\rho}{\sqrt{K}}\right)\times [0,T)
\end{equation}
and
\begin{equation}
\Lambda_0:=\sup_{B_{g(0)}\left(x_0,\frac{2\rho}{\sqrt{K}}\right)}|Rm|(x,0)<\infty.
\end{equation}
Then for any $n\ge 3$ and $p>\frac{n+2}{2}$ there exist constants $C_0>0$ and $C>0$ such that
\begin{align}\label{rm-bd-n>2}
&|Rm|(x,t)\notag\\
\le&C_0\left\{\frac{\rho^{\frac{2n}{n+2}}e^{C(\rho+tK)}}{K^{\frac{n}{n+2}}V_{x_0}\left(\rho/\sqrt{K}\right)^{\frac{2}{n+2}}\min(t,\rho^2/K)}
\left[(K+E_p(t)^{\frac{1}{p}})t+1\right]\right\}^{\frac{n+2}{2p}}(1+\sqrt{tV_{x_0}\left(2\rho/\sqrt{K}\right)}E_p(t)^{\frac{1}{2}})^{\frac{n+4}{2p}}
\end{align}
holds for any $x\in B_{g(0)}\left(x_0,\rho/\sqrt{K}\right)$ and $0<t<T$ where 
\begin{equation*}
E_p(t)=Ce^{CKt}t\left[\Lambda_0^{2p}V_{x_0}\left(2\rho/\sqrt{K}\right)+K^{2p}(1+\rho^{-4p})V_{x_0}\left(\rho/\sqrt{K}\right)\right]
\end{equation*}
and for $n=2$ and any $p>\frac{5}{2}$ there exist constants $C_0>0$ and $C>0$ such that
\begin{align}\label{rm-bd-n=2}
&|Rm|(x,t)\notag\\
\le&C_0\left\{\frac{\rho^{\frac{4}{5}}e^{C(\rho+tK)}}{K^{\frac{2}{5}}V_{x_0}\left(\rho/\sqrt{2K}\right)^{\frac{2}{5}}\min(t,\rho^2/K)}
\left[(K+(4\rho/\sqrt{K})^{\frac{1}{p}}E_p(t)^{\frac{1}{p}})t+1\right]\right\}^{\frac{5}{2p}}\cdot\notag\\
&\qquad\cdot (1+(4\rho/\sqrt{K})\sqrt{tV_{x_0}\left(2\rho/\sqrt{K}\right)}E_p(t)^{\frac{1}{2}})^{\frac{7}{2p}}
\end{align}
holds for any $x\in B_{g(0)}\left(x_0,\rho/\sqrt{K}\right)$ and $0<t<T$.
\end{thm}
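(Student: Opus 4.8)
\emph{Overview of the plan.} The plan is to combine a De Giorgi iteration on the super-level sets of $|Rm|$, based on the differential inequality \eqref{rm-curv-eqn2}, with two auxiliary ingredients: the two-sided metric comparison coming from \eqref{ric-curv-bd-cond}, and a preliminary local $L^{2p}$ estimate for $|Rm|$ that is exactly where the quantity $E_p(t)$ is produced. For the comparison, note that since $\partial g_{ij}/\partial t=-2R_{ij}$ and $|Ric|\le K$ on $B_{g(0)}(x_0,2\rho/\sqrt K)\times[0,T)$, integration in time gives $e^{-2Kt}g(0)\le g(t)\le e^{2Kt}g(0)$ there (the sub-balls used in the argument remain nested on the relevant parabolic cylinders by the same bound), so that volumes, gradients of fixed cut-offs, and Sobolev constants at times $t$ and $0$ differ only by factors $e^{CKt}$, while Bishop--Gromov comparison for the ball of radius $2\rho/\sqrt K$ under $Ric\ge -Kg$ contributes the factor $e^{C\rho}$. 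I also record a local Sobolev inequality on $B_{g(0)}(x_0,2\rho/\sqrt K)$, obtained from $|Ric|\le K$, volume comparison and the Sobolev inequality for balls with a volume lower bound (of P.~Li type): for $f$ supported in the ball, $\|f\|_{L^{2n/(n-2)}}^2\le C_S\bigl(\|\nabla f\|_{L^2}^2+(K/\rho^2)\|f\|_{L^2}^2\bigr)$ with $C_S\le Ce^{C(\rho+Kt)}(\rho^2/K)\,V_{x_0}(\rho/\sqrt K)^{-2/n}$; a suitable power of $C_S$ is the geometric prefactor in \eqref{rm-bd-n>2}. For $n=2$ one replaces this by the three–dimensional form of the inequality, which holds on a surface with the stated volume lower bound (with the worse constants seen in \eqref{rm-bd-n=2}, the exponents $5/(2p),7/(2p)$ being those of $n=3$), together with the extra factors $4\rho/\sqrt K$.

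\emph{The preliminary $L^{2p}$ estimate.} Fix a cut-off $\phi$ equal to $1$ on $B_{g(0)}(x_0,3\rho/(2\sqrt K))$, supported in $B_{g(0)}(x_0,2\rho/\sqrt K)$, with $|\nabla\phi|_{g(0)}\le C\sqrt K/\rho$, and set $v=|Rm|$. Multiplying \eqref{rm-curv-eqn2} by $v^{2p-1}\phi^2$, integrating over $M$, integrating by parts, using $\partial_t\,dv_t=-R\,dv_t$ with $|R|\le nK$, and absorbing the cut-off gradient terms by Young's inequality (which turns the cut-off data into a term $K^{2p}(1+\rho^{-4p})V_{x_0}(\rho/\sqrt K)$), one arrives at a differential inequality for $\int v^{2p}\phi^2\,dv_t$ whose only dangerous term is $C\int v^{2p+1}\phi^2\,dv_t$. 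Writing $v^{2p+1}\phi^2=v\,(v^p\phi)^2$ and invoking the Sobolev inequality above together with parabolic interpolation controls this term. Since $\Lambda_0<\infty$ the initial term contributes $\Lambda_0^{2p}V_{x_0}(2\rho/\sqrt K)$, and Grönwall's inequality then yields
\[
\sup_{0\le s\le t}\int v^{2p}\phi^2\,dv_s+\int_0^t\!\!\int|\nabla v^p|^2\phi^2\,dv_s\,ds\le C\,E_p(t),\qquad \int_0^t\!\!\int_{B_{g(0)}(x_0,3\rho/(2\sqrt K))}v^{2p}\,dv_s\,ds\le C\,E_p(t).
\]

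\emph{The De Giorgi iteration.} Put $\tau=\tfrac12\min(t,\rho^2/K)$ and take parabolic cylinders $Q_j$ decreasing to $B_{g(0)}(x_0,\rho/\sqrt K)\times(\tau,t)$ with $Q_0\subset B_{g(0)}(x_0,3\rho/(2\sqrt K))\times(\tau/2,t)$, carrying space–time cut-offs $\phi_j$ with $|\nabla\phi_j|\le C2^j\sqrt K/\rho$ and $|\partial_t\phi_j^2|\le C2^j/\tau$; for a parameter $M>0$ let $k_j=M(2-2^{-j})$ and $v_j=(v-k_j)_+$. Testing \eqref{rm-curv-eqn2} on $\{v>k_j\}$ against $v_j\phi_j^2$ gives an energy inequality for $v_j$ with no boundary terms — here the factor $1/\min(t,\rho^2/K)$ enters, through $|\partial_t\phi_j^2|$ — and, feeding its left side into the parabolic Sobolev inequality and writing $Y_j=\|v_j\phi_j\|_{L^{2(n+2)/n}}^2$, one bounds the right side by powers of $Y_{j-1}$ strictly larger than $1$. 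The sub-cubic pieces of $v^2v_j$ (the $v_j^2$ and $v_j$ terms) are controlled by the Chebyshev bound $|\{v>k_j\}\cap Q_{j-1}|\le(M2^{-j})^{-2(n+2)/n}\int v_{j-1}^{2(n+2)/n}\phi_{j-1}^2$; the genuinely cubic piece $\int v_j^3\phi_j^2\le\int_{\{v>k_j\}}v_j^2\,v\,\phi_j^2$ is handled by Hölder's inequality against $\|v\|_{L^p(Q_0)}\le(CE_p(t))^{1/2}\bigl(t\,V_{x_0}(2\rho/\sqrt K)\bigr)^{1/2}$, and \emph{here the hypothesis $p>\tfrac{n+2}{2}$ is used}: it is precisely the condition making the conjugate exponent $2p/(p-1)$ strictly smaller than $2(n+2)/n$, so that one more application of Chebyshev is allowed. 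The outcome is a recursion $Y_j\le C^{\,j}B\,Y_{j-1}^{1+\delta}$ with $\delta>0$ and $B$ assembled from $C_S,M,K,\rho,t$ and $E_p(t)$, whence $Y_j\to0$ and $v\le M$ on $B_{g(0)}(x_0,\rho/\sqrt K)\times(\tau,t)$ provided $Y_0$ lies below the corresponding threshold. Solving that threshold inequality for $M$ produces a lower bound on $M$ of exactly the form of the right-hand side of \eqref{rm-bd-n>2}; choosing $M$ equal to it gives \eqref{rm-bd-n>2} at every $(x,t)$ with $x\in B_{g(0)}(x_0,\rho/\sqrt K)$, and re-running the argument with the ``dimension $3$'' Sobolev inequality gives \eqref{rm-bd-n=2}.

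\emph{Main obstacle.} I expect the hard part to be twofold: first, establishing the local Sobolev inequality with the precise, quantitative dependence on $V_{x_0}(\rho/\sqrt K)$ and $K$ and controlling its change along the flow through the Ricci bound; and second, closing the preliminary $L^{2p}$ estimate despite the scaling–supercritical cubic term $|Rm|^3$ in \eqref{rm-curv-eqn2} — which is exactly what forces $p>\tfrac{n+2}{2}$ and requires a careful Sobolev/interpolation argument (and, since the relevant local norm of $|Rm|$ is not a priori small, a short-time continuity/bootstrap with the conclusion of the iteration). Once these are in place, the De Giorgi iteration and the final bookkeeping are routine.
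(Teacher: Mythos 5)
Your De Giorgi iteration (Step~2) is structurally the same as the paper's: parabolic cylinders $Q_j$ shrinking to $B_{g(0)}(x_0,\rho/\sqrt K)\times(\tau,t)$, truncations $(v-k_j)_+$, space--time cut-offs, the Sobolev inequality of P.~Li to close the energy estimate, H\"older against the local $L^{2p}$ norm of $v$ to handle the cubic term, and the condition $p>\tfrac{n+2}{2}$ to make the recursion supergeometric. The normalization differs (you test with $(v-k_j)_+\phi_j^2$ and track $\|v_j\phi_j\|_{L^{2(n+2)/n}}^2$ while the paper tests with $(v-k_{m+1})_+^{p-1}\phi_m^2$ and tracks $\iint(v-k_m)_+^p$), but this is cosmetic. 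Likewise the $n=2$ reduction (pass to $M\times\R$ so the three-dimensional Sobolev inequality applies, giving the exponents $5/(2p)$, $7/(2p)$ and the extra $4\rho/\sqrt K$ factors) matches the paper exactly.

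The genuine gap is in your Step~1, the preliminary local $L^{2p}$ estimate that produces $E_p(t)$. You propose to test \eqref{rm-curv-eqn2} against $v^{2p-1}\phi^2$ and then handle the resulting supercritical term $C\iint v^{2p+1}\phi^2\,dv_t$ by writing it as $v\cdot(v^p\phi)^2$, applying the Sobolev inequality, and interpolating, followed by ``a short-time continuity/bootstrap with the conclusion of the iteration.'' This does not close. Sobolev gives
$\int v^{2p+1}\phi^2\le \|v^p\phi\|_{L^{2n/(n-2)}}^2\,\|v\|_{L^{n/2}(\operatorname{supp}\phi)}$,
and since the local $L^{n/2}$ norm of $|Rm|$ is not known to be small, the Dirichlet term cannot be absorbed; and the bootstrap you invoke is circular, because running the iteration presupposes the very $L^{2p}$ bound you are trying to establish. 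The paper avoids this difficulty entirely by invoking Proposition~1 of \cite{KMW} (Proposition~\ref{lp-integral-bd-prop} here), whose proof rests on a structural observation about Ricci flow that your plan does not mention: using the inequality $|\nabla Ric|^2\le\tfrac12(\Delta-\partial_t)|Ric|^2+CK^2|Rm|$ and the exact evolution formula for $R^l_{ijk}$ in terms of $\nabla^2 Ric$, one can augment the energy quantity with the correction terms $\tfrac1K\int|Ric|^2|Rm|^{p-1}\phi^{2p}+c_1K\int|Rm|^{p-1}\phi^{2p}$; after integration by parts these terms algebraically cancel the cubic term $|Rm|^{p+1}$, yielding an unconditional Gr\"onwall-type inequality with no smallness, Sobolev inequality, or bootstrap required. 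Without this cancellation mechanism your preliminary estimate does not exist, and the iteration has no input. To repair the proposal you should replace your Step~1 by this argument (i.e., prove or cite Proposition~\ref{lp-integral-bd-prop}); the rest of your plan then goes through.
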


\begin{rmk}
Note that the bounds for the Riemannian curvature in \eqref{rm-bd-n>2} and \eqref{rm-bd-n=2} are slightly different from that of Theorem 1 of \cite{KMW}. When $t\to\infty$, both the right hand side of \eqref{rm-bd-n>2}, \eqref{rm-bd-n=2}, and the bound in Theorem 1 of \cite{KMW} are approximately equal to $e^{CKt}$ for some constant $C>0$.
However, for $0<t<\rho^2/K$ and $t$ close to zero, the right hand side of \eqref{rm-bd-n>2} and \eqref{rm-bd-n=2} are approximately equal to $Ct^{-\frac{n+2}{2p}}$ and $Ct^{-\frac{5}{2p}}$ respectively for some constant $C>0$, while the bound in Theorem 1 of \cite{KMW} is approximately equal to $Ct^{-\beta}$ for some constant $\beta>0$. Since the constant $\beta$ in Theorem 1 of \cite{KMW} is unknown, Theorem \ref{lp-l-infty-thm} is therefore a refinement of the result in Theorem 1 of \cite{KMW}.
\end{rmk}

\section{The main result}
\setcounter{equation}{0}
\setcounter{thm}{0}
 
We first recall a result of \cite{KMW}:

\begin{prop}\label{lp-integral-bd-prop}(Proposition 1 of \cite{KMW})
Let $g(t)$, $0\le t<T$, be a smooth solution of Ricci flow on a $n$-dimensional Riemannian manifold $M$.  Suppose there exists $x_0\in M$ and constants $K>0$, $\rho>0$, such that \eqref{ric-curv-bd-cond} holds. Then for any $n\ge 2$ and $q\ge 3$ there exists a constant $c=c(n,q)>0$ such that
\begin{equation*}
\int_{B_{g(0)}\left(x_0,\frac{\rho}{\sqrt{K}}\right)}|Rm|(x,t)^q\,dv_t\le ce^{cKt}\left\{\int_{B_{g(0)}\left(x_0,\frac{2\rho}{\sqrt{K}}\right)}|Rm|(x,0)^q\,dv_0+cK^q(1+\rho^{-2q})V_{x_0}\left(\rho/\sqrt{K},t\right)\right\}
\end{equation*}
holds for any $0\le t<T$.
\end{prop}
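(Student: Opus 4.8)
\ni The plan is to prove the estimate by a weighted $L^q$-energy estimate for $|Rm|$, based on the differential inequality \eqref{rm-curv-eqn2} and closed with a local Sobolev inequality that holds under the two-sided Ricci bound \eqref{ric-curv-bd-cond}.

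\ni\textbf{Reductions and set-up.} By the parabolic rescaling $g(t)\mapsto Kg(t/K)$ we may assume $K=1$, so that $|Ric|\le1$ on $B_{g(0)}(x_0,2\rho)\times[0,T)$; the factors $K^q$, $\rho^{-2q}$ and $e^{cKt}$ in the conclusion are restored at the end by undoing the rescaling. Fix once and for all a smooth cut-off $\phi$ on $M$ depending only on $\mbox{dist}_{g(0)}(\cdot,x_0)$, with $0\le\phi\le1$, $\phi\equiv1$ on $B_{g(0)}(x_0,\rho)$, $\mbox{supp}\,\phi\subset B_{g(0)}(x_0,2\rho)$ and $|\nabla_{g(0)}\phi|^2_{g(0)}\le C\rho^{-2}$. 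Since $\phi$ is built from the time-independent metric $g(0)$, $\1_t\phi=0$ and $\mbox{supp}\,\phi$ is a fixed compact region. The hypothesis $|Ric|\le1$ then gives $e^{-2t}g(0)\le g(t)\le e^{2t}g(0)$ on $\mbox{supp}\,\phi$, hence $e^{-nt}\,dv_0\le dv_t\le e^{nt}\,dv_0$ and $|\nabla_{g(t)}\phi|^2_{g(t)}\le Ce^{2t}\rho^{-2}$, while $\1_t\,dv_t=-R\,dv_t$ with $|R|\le n$; moreover (by results of P.~Li \cite{L}, or a Saloff-Coste type argument) there is a local $L^2$ Sobolev inequality on $(M,g(t))$ for functions supported in $B_{g(0)}(x_0,2\rho)$ whose constant depends only on $n$, $t$ and a lower bound for $V_{x_0}(\rho,t)$.

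\ni\textbf{The energy inequality.} To legitimise differentiating $|Rm|$ where it vanishes, work with $u_\3:=(|Rm|^2+\3^2)^{1/2}$, which satisfies \eqref{rm-curv-eqn2} up to an $O(\3)$ error; test that inequality against $\phi^2u_\3^{q-1}\ge0$, integrate over $M$ with respect to $dv_t$, integrate the Laplacian term by parts (Cauchy--Schwarz on the cross term $\int\phi u_\3^{q-1}\nabla\phi\cdot\nabla u_\3$, absorbing half of it into the good term), use $\1_t\,dv_t=-R\,dv_t$ with $|R|\le n$, and let $\3\downarrow0$. Writing $Y(t):=\int_M\phi^2|Rm|^q\,dv_t$, this yields
\begin{equation*}
Y'(t)+c(q)\int_M\phi^2|Rm|^{q-2}|\nabla|Rm||^2\,dv_t\ \le\ C(q)\int_M|Rm|^q|\nabla\phi|^2\,dv_t+C(q)\int_M\phi^2|Rm|^{q+1}\,dv_t+C(q)Y(t).
\end{equation*}
The first term on the right is $\le Ce^{2t}\rho^{-2}\int_{\text{supp}\,\phi}|Rm|^q\,dv_t$; with a suitable hierarchy of nested cut-offs it is eventually traded, via Young's inequality against the reaction term, for the $K^q(1+\rho^{-2q})V_{x_0}(\rho/\sqrt K,t)$ contribution of the statement, and the last term is harmless for Gronwall.

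\ni\textbf{The crux.} The real difficulty is the supercritical reaction integral $\int_M\phi^2|Rm|^{q+1}\,dv_t$, which carries one power of $|Rm|$ more than $Y(t)$ and so cannot be handled by Gronwall alone; the plan is to absorb it into the good gradient term $\int_M\phi^2|Rm|^{q-2}|\nabla|Rm||^2\,dv_t$. With $v:=\phi|Rm|^{q/2}$ one has $\int_M\phi^2|Rm|^{q+1}\,dv_t=\int_M v^2|Rm|\,dv_t$; combining the local Sobolev inequality $\|v\|_{L^{2n/(n-2)}}^2\le C_S(\|\nabla v\|_{L^2}^2+\|v\|_{L^2}^2)$, the estimate $\|\nabla v\|_{L^2}^2\le C(q)\int_M\phi^2|Rm|^{q-2}|\nabla|Rm||^2\,dv_t+C\int_M|Rm|^q|\nabla\phi|^2\,dv_t$, and Hölder's inequality $\int_M v^2|Rm|\,dv_t\le\|v\|_{L^{2n/(n-2)}}^2\,\|\,|Rm|\,\|_{L^{n/2}(\text{supp}\,\phi)}$ --- or, more efficiently, the parabolic embedding $L^\infty_tL^2_x\cap L^2_tH^1_x\hookrightarrow L^{2(n+2)/n}_{x,t}$ applied to $v$ --- one dominates the reaction integral by the good term times a power of an $L^{n/2}$-type curvature norm over $\text{supp}\,\phi$. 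The hypothesis $q\ge3$ enters at this point: it forces $q$ to be large enough that, through Hölder's inequality and interpolation, this auxiliary norm is controlled by a \emph{sublinear} power of the quantities $Y$ and $\int_0^t\!\int_M\phi^2|Rm|^{q-2}|\nabla|Rm||^2\,dv_s\,ds$ being estimated, so that one further application of Young's inequality completes the absorption. I expect this to be the principal obstacle: arranging the exponent bookkeeping so that the reaction term is genuinely subcritical over the entire range $q\ge3$, $n\ge2$, while keeping every constant dependent only on $n$, $q$ and the volume lower bound. Once this is achieved, Gronwall's inequality applied to $Y$ gives $Y(t)\le ce^{ct}\{Y(0)+c\int_0^t\!\int_M|Rm|^q|\nabla\phi|^2\,dv_s\,ds\}$; estimating the cut-off term as above and passing from $K=1$ back to general $K$ then reproduces the stated inequality, including the prefactor $ce^{cKt}$ and the term $cK^q(1+\rho^{-2q})V_{x_0}(\rho/\sqrt K,t)$.
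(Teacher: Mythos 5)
Your proposal takes a genuinely different route from the paper, and it has a gap that I do not think can be repaired within your scheme. After testing \eqref{rm-curv-eqn2} with $\phi^2|Rm|^{q-1}$ you are left with the supercritical reaction term $\int\phi^2|Rm|^{q+1}\,dv_t$, and your plan is to absorb it into the gradient term via Sobolev plus H\"older, paying a factor $\||Rm|\|_{L^{n/2}(\mathrm{supp}\,\phi)}$. But that factor is exactly the kind of quantity the proposition is supposed to bound: the hypotheses give only $|Ric|\le K$ and an $L^q$ bound on $|Rm|$ at $t=0$, not any a priori spacetime control of $\||Rm|\|_{L^{n/2}}$ at later times. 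Sobolev and H\"older convert the reaction term into (full gradient term)$\times\||Rm|\|_{L^{n/2}}$, and the gradient term enters \emph{linearly}, so absorption requires that the curvature factor be small (or at least bounded by a constant you already possess) --- this is precisely the smallness hypothesis in Ye's results \cite{Y1}, \cite{Y2} and \cite{DWY}, which Proposition \ref{lp-integral-bd-prop} deliberately avoids. If instead you keep the term, the best you get is a differential inequality of the type $Y'\lesssim Y^{1+\delta}+\dots$, which blows up in finite time and cannot yield the stated bound $ce^{cKt}\{\dots\}$ valid on all of $[0,T)$. The claim that $q\ge3$ makes the auxiliary norm ``sublinear in $Y$'' does not fix this, since the product (gradient term)$\times Y^{\theta}$ with the gradient term to the first power cannot be split by Young's inequality so as to absorb the gradient term without a smallness constant.

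The paper (following \cite{KMW}) avoids this obstruction by not working with the scalar inequality \eqref{rm-curv-eqn2} alone. The key point is to use the structure of the Ricci flow: the identity $\1_tR^l_{ijk}=g^{lq}(\nabla_i\nabla_qR_{jk}+\nabla_j\nabla_iR_{kq}+\nabla_j\nabla_kR_{iq})-g^{lq}(\nabla_i\nabla_jR_{kq}+\nabla_i\nabla_kR_{jq}+\nabla_j\nabla_qR_{ik})$ lets one integrate by parts so that derivatives fall on the Ricci tensor, whose size is controlled by the hypothesis $|Ric|\le K$; the resulting $|\nabla Ric|^2$ terms are handled with the Bochner-type inequality $|\nabla Ric|^2\le\frac12(\Delta-\1_t)|Ric|^2+CK^2|Rm|$; and, crucially, one differentiates not $\int|Rm|^p\phi^{2p}\,dv_t$ alone but the corrected energy $\int|Rm|^p\phi^{2p}\,dv_t+\frac1K\int|Ric|^2|Rm|^{p-1}\phi^{2p}\,dv_t+c_1K\int|Rm|^{p-1}\phi^{2p}\,dv_t$, whose derivative is bounded linearly by itself plus a lower-order cut-off term. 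Gronwall then gives the $e^{cKt}$ growth with no Sobolev inequality and no smallness of curvature. If you want to complete your write-up you should replace the Sobolev/absorption step by this mechanism (the hypothesis $q\ge3$ is what makes the algebra with the $|Rm|^{p-1}$ and $|Ric|^2|Rm|^{p-1}$ terms work, not an interpolation exponent count); your rescaling to $K=1$ and the cut-off bookkeeping are fine and can be kept.
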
 
\begin{proof}
A proof of this result is given in \cite{KMW}. For the sake of completeness we will give a sketch of the proof of this result in this paper. By using \eqref{rm-curv-eqn1}, the inequalities (Chapter 6 of \cite{CK} or Lemma 1 of \cite{KMW}),
\begin{equation*}
\left\{\begin{aligned}
&|\nabla Ric|^2\le\frac{1}{2}(\Delta-\1_t)|Ric|^2+CK^2|Rm|\\
&\1_tR^l_{ijk}=g^{lq}(\nabla_i\nabla_qR_{jk}+\nabla_j\nabla_iR_{kq}+\nabla_j\nabla_kR_{iq})-g^{lq}(\nabla_i\nabla_jR_{kq}+\nabla_i\nabla_kR_{jq}+\nabla_j\nabla_qR_{ik}),
\end{aligned}\right.
\end{equation*}
and a direct computation one can show that there exist constants $c_1>0$ and $c_2>0$ such that
\begin{align*}
&\frac{d}{dt}\left(\int_M|Rm|^p\phi^{2p}\,dv_t+\frac{1}{K}\int_M|Ric|^2|Rm|^{p-1}\phi^{2p}\,dv_t +c_1K\int_M|Rm|^{p-1}\phi^{2p}\,dv_t\right)\notag\\
\le &c_2K\int_M|Rm|^p\phi^{2p}\,dv_t+c_2K\int_M|Rm|^{p-1}\phi^{2p-2}\,dv_t
\end{align*} 
holds on $M\times (0,T)$ for any Lipschitz function $\phi$ with support in $B\left(x_0,\frac{2\rho}{\sqrt{K}}\right)$. Proposition \ref{lp-integral-bd-prop} then follows by choosing an appropriate cut-off function $\phi$ for the set $B\left(x_0,\frac{\rho}{\sqrt{K}}\right)$ and integrating the above differential inequality over $(0,t)$, $0<t<T$.
\end{proof}

\begin{lem}(cf. Theorem 14.3 of \cite{L})\label{sobolev-ineqn1}
Let $(M,g)$ be a complete Riemannian manifold of dimension $n\ge 3$ with Ricci curvature satisfying 
\begin{equation*}
R_{ij}\ge -(n-1)k_1\quad\mbox{ on }B(x_0,\rho)
\end{equation*} 
for some constant $k_1\ge 0$. Then there exists constants $c_1>0$ and $c_2>0$ depending only on $n$ such that for any function $f\in H_c^{1,2}(B(x_0,\rho))$ with compact support in $B(x_0,\rho)$, $f$ satisfies
\begin{equation*}
\left(\int_{B(x_0,\rho)}|f|^{\frac{2n}{n-2}}\right)^{\frac{n-2}{n}}\,dv\le c_1\frac{\rho^2e^{c_2\rho\sqrt{k_1}}}{V_{x_0}(\rho)^{2/n}}\int_{B(x_0,\rho)}|\nabla f|^2\,dv
\end{equation*}
\end{lem}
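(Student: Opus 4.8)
\emph{Strategy of proof.} The asserted estimate is a scale-invariant local Sobolev inequality, and the plan is to obtain it from the two standard consequences of a lower Ricci bound --- volume doubling and a local Neumann--Poincar\'e inequality --- by the machinery of Grigor'yan and Saloff-Coste. The first step is to normalize the curvature: since the inequality is invariant under a constant rescaling $g\mapsto k_1 g$ of the metric (the factors $\rho^2$, $V_{x_0}(\rho)^{2/n}$, $\int|\nabla f|^2\,dv$, and $\big(\int|f|^{2n/(n-2)}\,dv\big)^{(n-2)/n}$ all transform compatibly, using $n\ge 3$), when $k_1>0$ one may assume $R_{ij}\ge -(n-1)$ on a ball $B(x_0,r)$ with $r=\rho\sqrt{k_1}$, and it then suffices to prove $\big(\int|f|^{2n/(n-2)}\,dv\big)^{(n-2)/n}\le c_1 e^{c_2 r}\,r^2 V_{x_0}(r)^{-2/n}\int|\nabla f|^2\,dv$ for $f\in H_c^{1,2}(B(x_0,r))$ with $c_1,c_2$ depending only on $n$ (when $k_1=0$ no rescaling is needed and the exponential factor is simply $1$).

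Next I would assemble the two ingredients on $B(x_0,r)$. The Bishop--Gromov volume comparison theorem gives, for every $x$ and radii $0<s\le t$ with $B(x,t)\subset B(x_0,r)$, the doubling bound $V_x(t)\le C(n)e^{C(n)t}(t/s)^n V_x(s)$. Buser's inequality under a lower Ricci bound --- equivalently, integration of the Li--Yau gradient estimate for the heat semigroup --- gives the $L^2$ Neumann--Poincar\'e inequality $\int_{B(x,t)}|f-f_{B(x,t)}|^2\,dv\le C(n)e^{C(n)t}\,t^2\int_{B(x,2t)}|\nabla f|^2\,dv$ on such balls. Feeding these two inputs into the Saloff-Coste/Grigor'yan circle of equivalences (doubling $+$ $L^2$ Poincar\'e on balls $\Longleftrightarrow$ parabolic Harnack $\Longleftrightarrow$ on-diagonal heat-kernel bound $H(x,x,t)\le C\,V_x(\sqrt{t})^{-1}$ $\Longleftrightarrow$ Faber--Krahn, hence the Sobolev inequality of dimension $n$, since $n\ge 3$) produces precisely the desired inequality for $f$ supported in $B(x_0,r)$, with the constant of the stated exponential form; undoing $r=\rho\sqrt{k_1}$ then recovers the factor $\rho^2 e^{c_2\rho\sqrt{k_1}}/V_{x_0}(\rho)^{2/n}$.

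The step I expect to cost the most care is that the Ricci lower bound is assumed only on $B(x_0,r)$ itself, so doubling and the Poincar\'e inequality are legitimately available only for sub-balls that stay away from $\partial B(x_0,r)$. I would deal with this by first proving the inequality with $r$ replaced by an arbitrary $r'<r$ for $f\in H_c^{1,2}(B(x_0,r'))$ --- for which all the sub-balls in play can be chosen inside $B(x_0,r)$ --- while keeping the constants of the form $c_1 e^{c_2 r}$, and then letting $r'\uparrow r$, using that every compactly supported $f$ is supported in some $B(x_0,r')$. An alternative that avoids the Saloff-Coste black box, and is essentially P.~Li's route in \cite{L}, is to run Moser iteration directly on the Dirichlet heat equation on $B(x_0,r)$ --- which by construction only sees the intrinsic geometry of that ball --- to obtain the on-diagonal heat-kernel upper bound, and then dualize to the Sobolev inequality. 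Either way, the residual chore is the bookkeeping needed to keep the exponential dependence on $r$, hence on $\rho\sqrt{k_1}$, under control through the iteration.
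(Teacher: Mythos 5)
The paper offers no proof of this lemma at all: it is quoted verbatim (up to notation) as Theorem 14.3 of P.\ Li's book \cite{L}, and the author simply uses it as a known ingredient. So there is no ``paper's own proof'' against which to compare your argument, and the most relevant comparison is with Li's proof in \cite{L}.

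As a freestanding argument, your sketch is sound. The rescaling $g\mapsto k_1 g$ is carried out correctly (the Ricci tensor as a $(0,2)$-tensor is unchanged, so the lower bound becomes $R_{ij}\ge -(n-1)\tilde g_{ij}$; the radius becomes $\rho\sqrt{k_1}$; and the volume, Dirichlet energy, and $L^{2n/(n-2)}$-norm all pick up compensating powers of $k_1^{n/2-1}$), so it genuinely suffices to treat $k_1=1$. The two ingredients you list --- Bishop--Gromov doubling with an $e^{Cr}$ correction, and a Buser/Li--Yau local Neumann--Poincar\'e inequality of the same exponential quality --- are exactly the right ones, and the Grigor'yan/Saloff-Coste equivalences (doubling $+$ Poincar\'e $\Rightarrow$ relative Faber--Krahn $\Rightarrow$ localized $L^2$-Sobolev of dimension $n$, valid since $n\ge 3$) do yield the inequality with constants of the form $c_1e^{c_2 r}r^2 V_{x_0}(r)^{-2/n}$, and your remark about working on $B(x_0,r')$ with $r'\uparrow r$ is the correct way to confine all sub-balls to the region where the Ricci bound is assumed. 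One caveat on the ``alternative'' route you mention: Moser iteration for the heat equation is not self-contained, since it already requires a Sobolev (or Faber--Krahn) input; what Li actually does in Chapter 14 of \cite{L} is first derive an on-diagonal heat-kernel upper bound from the Li--Yau gradient estimate (and its integrated Harnack inequality), then pass from the heat-kernel bound to a Faber--Krahn-type eigenvalue estimate, and finally convert that into the Sobolev inequality --- in spirit close to your ``alternative,'' but with the heat-kernel bound coming from the gradient estimate rather than from iteration. Your first, Saloff-Coste-style route is the more robust and better-documented path if you wanted to make the sketch fully rigorous, while Li's own route is somewhat more elementary in the sense of not invoking the full circle of equivalences.
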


\begin{thm}\label{rm-curvature-l-infty-lp-bd-thm}
Let $g(t)$, $0\le t<T$, be a smooth solution of Ricci flow on a $n$-dimensional Riemannian manifold $M$. Suppose there exists $x_0\in M$ and constants $K>0$, $\rho>0$, such that \eqref{ric-curv-bd-cond} holds. Then for any  $n\ge 3$ and $p>\frac{n+2}{2}$ there exist constants $C_0>0$ and $C>0$ such that 
\begin{align}\label{rm-l-infty-lp-bd}
|Rm|(x,t)
\le&C_0\left\{\frac{\rho^{\frac{2n}{n+2}}e^{C(\rho+tK)}}{K^{\frac{n}{n+2}}V_{x_0}\left(\rho/\sqrt{K}\right)^{\frac{2}{n+2}}\min(t,\rho^2/K)}
\left[\left(\left(\iint_{Q_0}|Rm|^{2p}\,dv_0\,dt\right)^{\frac{1}{p}}+K\right)t+1\right]\right\}^{\frac{n+2}{2p}}\cdot\notag\\
&\qquad\cdot\left(1+\iint_{Q_0}|Rm|^p\,dv_0\,dt\right)^{\frac{n+4}{2p}}
\end{align}
holds for any $x\in B_{g(0)}\left(x_0,\rho/\sqrt{K}\right)$ and $0<t<T$ where $Q_0=B_{g(0)}\left(x_0,2\rho/\sqrt{K}\right)\times (t/4,t)$ and for $n=2$ and any $p>\frac{5}{2}$ there exist constants $C_0>0$ and $C>0$ such that 
\begin{align}\label{rm-l-infty-lp-bd2}
|Rm|(x,t)
\le&C_0\left\{\frac{\rho^{\frac{4}{5}}e^{C(\rho+tK)}}{K^{\frac{2}{5}}V_{x_0}\left(\rho/\sqrt{2K}\right)^{\frac{2}{5}}\min(t,\rho^2/K)}
\left[\left(K+\left(\frac{4\rho}{\sqrt{K}}\iint_{Q_0}|Rm|^{2p}\,dv_0\,dt\right)^{\frac{1}{p}} \right)t+1\right]\right\}^{\frac{5}{2p}}\cdot\notag\\
&\qquad\cdot\left(1+\frac{4\rho}{\sqrt{K}}\iint_{Q_0}|Rm|^p\,dv_0\,dt\right)^{\frac{7}{2p}}
\end{align}
holds for any $x\in B_{g(0)}\left(x_0,\rho/\sqrt{K}\right)$ and $0<t<T$.
\end{thm}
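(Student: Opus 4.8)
\medskip
\noindent{\it Plan of proof.} The plan is to run a De Giorgi iteration on the nonlinear heat inequality \eqref{rm-curv-eqn2}. Write $u=|Rm|$, so that $u\ge0$ is a subsolution of $\1_t u\le\Delta_{g(t)}u+Cu^2$ by \eqref{rm-curv-eqn2}; since Theorem~\ref{rm-curvature-l-infty-lp-bd-thm} involves only quantities on $Q_0$, only \eqref{rm-curv-eqn2}, the hypothesis \eqref{ric-curv-bd-cond} and Lemma~\ref{sobolev-ineqn1} will be needed (Proposition~\ref{lp-integral-bd-prop} enters afterwards, when this theorem is combined with it to yield Theorem~\ref{lp-l-infty-thm}). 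First I would normalize. Integrating \eqref{rf-eqn} and using \eqref{ric-curv-bd-cond} gives $e^{-2Kt}g(0)\le g(t)\le e^{2Kt}g(0)$ and $e^{-nKt}\,dv_0\le dv_t\le e^{nKt}\,dv_0$ on $B_{g(0)}(x_0,2\rho/\sqrt K)$, so that the balls, volumes, gradients and Laplacians of $g(t)$ at the relevant scales are all comparable, up to a factor $e^{CKt}$, to those of $g(0)$. Since $R_{ij}(0)\ge-K=-(n-1)\tfrac{K}{n-1}$ on that ball, Lemma~\ref{sobolev-ineqn1} gives, for $n\ge3$,
\[
\Big(\int|f|^{\frac{2n}{n-2}}\,dv_0\Big)^{\frac{n-2}{n}}\le C_S\int|\nabla_0 f|^2\,dv_0,\qquad C_S=c\,\frac{\rho^2\,e^{C\rho}}{K\,V_{x_0}\big(2\rho/\sqrt K\big)^{2/n}},
\]
for every $f$ with compact support in $B_{g(0)}(x_0,2\rho/\sqrt K)$. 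From here on all integrals, balls and cutoffs are understood with respect to $g(0)$, the $e^{CKt}$-distortion being absorbed into the constants; this accounts for the factor $e^{C(\rho+tK)}$ in \eqref{rm-l-infty-lp-bd}.

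For the energy estimate, fix $k\ge0$ and set $w=(u-k)_+$. Multiplying $\1_t u\le\Delta u+Cu^2$ by $w\psi^2$, with $\psi$ a Lipschitz space--time cutoff supported in $B_{g(0)}(x_0,2\rho/\sqrt K)\times(0,T)$, integrating over $M$, integrating by parts, using $\1_t\,dv_t=-R\,dv_t$ with $|R|\le nK$, and integrating in time gives
\[
\sup_s\int(w\psi)^2\,dv_0+\iint|\nabla(w\psi)|^2\,dv_0\,dt\le C\iint w^2\big(|\nabla\psi|^2+|\1_t\psi|+K\psi^2\big)\,dv_0\,dt+C\iint u^2 w\psi^2\,dv_0\,dt.
\]
Interpolating the Sobolev inequality above against $\sup_s\int(w\psi)^2$ turns this into the parabolic Sobolev bound $\iint(w\psi)^{\frac{2(n+2)}{n}}\le C_S\big(\sup_s\int(w\psi)^2\big)^{2/n}\iint|\nabla(w\psi)|^2$. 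For the nonlinear term, H\"older's inequality with the assumed bound on $\iint_{Q_0}u^{2p}$, followed by interpolation between $L^{p/(p-1)}$ and $L^{2(n+2)/n}$ (legitimate because $p/(p-1)<2(n+2)/n$), gives
\[
\iint u^2 w\psi^2\,dv_0\,dt\le\Big(\iint_{Q_0}u^{2p}\,dv_0\,dt\Big)^{\frac1p}\big|\{w>0\}\cap\operatorname{supp}\psi\big|^{\mu}\Big(\iint(w\psi)^{\frac{2(n+2)}{n}}\,dv_0\,dt\Big)^{\frac{n}{2(n+2)}},\quad\mu=\tfrac{p-1}{p}-\tfrac{n}{2(n+2)}>0;
\]
inserting the parabolic Sobolev bound on the right and applying Young's inequality absorbs the resulting power of the energy into the left side of the energy estimate.

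For the iteration, fix $x\in B_{g(0)}(x_0,\rho/\sqrt K)$ and $t\in(0,T)$. Take parabolic cylinders $Q_j=B_{g(0)}(x_0,r_j)\times(t-\tau_j,t)$ with $r_j\downarrow\rho/\sqrt K$ and $\tau_j\downarrow\tfrac14\min(t,\rho^2/K)$ (so $Q_0$ is the cylinder in the statement), cutoffs $\psi_j\equiv1$ on $Q_{j+1}$ with $\operatorname{supp}\psi_j\subset Q_j$, $|\nabla\psi_j|\le C2^j\sqrt K/\rho$, $|\1_t\psi_j|\le C4^j/\min(t,\rho^2/K)$, and truncation levels $k_j=M(1-2^{-j})\uparrow M$ with $M>0$ to be chosen. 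Putting $Y_j=\iint_{Q_j}(u-k_j)_+^2\,dv_0\,dt$, Chebyshev's inequality on $Q_j$ gives $|\{u>k_{j+1}\}\cap Q_j|\le4^{j+1}M^{-2}Y_j$, and feeding this together with the two estimates above (applied with $k=k_{j+1}$, $\psi=\psi_j$ and $(u-k_{j+1})_+\le(u-k_j)_+$) into the bound for $Y_{j+1}$, and using H\"older once more, yields a recursion
\[
Y_{j+1}\le C\,b^{\,j}\big(\mathcal A_1\,Y_j^{1+\delta_1}+\mathcal A_2\,Y_j^{1+\delta_2}\big),\qquad b>1,\ \ \delta_1=\tfrac{2}{n+2},\ \ \delta_2=2\mu+\tfrac{2}{n+2}-1,
\]
with $\mathcal A_1\sim C_S^{\frac{n}{n+2}}\big(\tfrac1{\min(t,\rho^2/K)}+K\big)M^{-\frac{4}{n+2}}$ and $\mathcal A_2\sim C_S^{\frac{2n}{n+2}}\big(\iint_{Q_0}u^{2p}\,dv_0\,dt\big)^{2/p}M^{-2(1+\delta_2)}$; here $\delta_1>0$ always, while $\delta_2>0$ precisely when $p>\tfrac{n+2}{2}$. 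By the standard fast-convergence lemma for such recursions, $Y_j\to0$ — hence $|Rm|(x,t)\le\sup_{Q_\infty}u\le M$ — as soon as $Y_0$ lies below an explicit threshold built from $\mathcal A_1,\mathcal A_2,b,\delta_1,\delta_2$. Using $Y_0=\iint_{Q_0}u^2\,dv_0\,dt\le|Q_0|^{1-\frac{2}{p}}\big(\iint_{Q_0}u^p\,dv_0\,dt\big)^{\frac{2}{p}}$ with $|Q_0|\le C\,t\,V_{x_0}(2\rho/\sqrt K)$, this threshold condition becomes an inequality for $M$ whose solution is a constant multiple of the right-hand side of \eqref{rm-l-infty-lp-bd} — the quantities $\rho$, $K$, $V_{x_0}$, $\min(t,\rho^2/K)$ and $\iint_{Q_0}u^{2p}$ entering through $C_S$, the cutoff scales and $\mathcal A_2$, and $\iint_{Q_0}u^p$ through $Y_0$ — proving \eqref{rm-l-infty-lp-bd}. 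For $n=2$ one instead applies the case $n=3$ on the product manifold $M\times S^1_\ell$ with $\ell=4\rho/\sqrt K$ and metric $g(t)+d\theta^2$, which solves \eqref{rf-eqn} and satisfies $|\widetilde{Rm}|_{\widetilde g(t)}=|Rm|_{g(t)}$ and $|\widetilde{Ric}|_{\widetilde g(t)}\le K$; the identity $\iint_{\widetilde Q_0}|\widetilde{Rm}|^q\,d\widetilde v_0\,dt=\ell\iint_{Q_0}|Rm|^q\,dv_0\,dt$ and the volume comparisons $\sqrt2\,r\,V_{x_0}(r/\sqrt2)\le\operatorname{vol}_{\widetilde g(0)}\widetilde B((x_0,\theta_0),r)\le\ell\,V_{x_0}(r)$ (the first producing the $V_{x_0}(\rho/\sqrt{2K})$, the second the extra factors $4\rho/\sqrt K$) turn \eqref{rm-l-infty-lp-bd} into \eqref{rm-l-infty-lp-bd2}.

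The main obstacle is the nonlinear term $\iint u^2 w\psi^2$: by parabolic scaling it is critical at the exponent $p=\tfrac{n+2}{2}$, and the iteration closes only because for $p>\tfrac{n+2}{2}$ the recursion it produces is strictly superlinear ($\delta_2>0$), while at $p=\tfrac{n+2}{2}$ one has $\delta_2=0$ and the recursion is merely linear. Arranging this requires simultaneously extracting a strictly positive power $|\{w>0\}|^{\mu}$ of the super-level-set measure from the nonlinear term and absorbing the companion power of the energy by Young's inequality — in particular $p>2$ is needed — and these are exactly the constraints that produce the hypothesis. Once this term is under control, the remaining ingredients (the Caccioppoli and parabolic Sobolev estimates, the fast-convergence lemma, and the bookkeeping of $C_S$, the cutoff scales $1/\min(t,\rho^2/K)$, the time length $t$, the volumes, and the $n=2$ reduction) are routine, if lengthy.
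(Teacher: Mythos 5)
Your strategy is the same in outline as the paper's — De Giorgi iteration on \eqref{rm-curv-eqn2} over nested cylinders, after passing to the $g(0)$-geometry via \eqref{ric-curv-bd-cond} so that Lemma~\ref{sobolev-ineqn1} supplies the Sobolev constant $C_S$, with the nonlinear term handled by H\"older against $\iint_{Q_0}|Rm|^{2p}$ and the case $n=2$ handled by going to a product manifold. Your various sanity checks ($\mu>0$, $\delta_2>0\Leftrightarrow p>\tfrac{n+2}{2}$, the $n=2$ volume comparisons) are correct.

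However, the internal bookkeeping you propose is genuinely different from the paper's, and this difference is not harmless: it produces a bound with the wrong exponents, so the last step of your sketch — ``the threshold condition becomes an inequality for $M$ whose solution is a constant multiple of the right-hand side of \eqref{rm-l-infty-lp-bd}'' — is false as stated. You test \eqref{rm-curv-eqn2} against $(u-k)_+\psi^2$ and iterate on $Y_j=\iint(u-k_j)_+^2\,dv_0\,dt$, so Chebyshev gives $|E_j|\le 4^{j+1}M^{-2}Y_j$ and the coefficient $\mathcal A_1$ in your recursion carries $M^{-4/(n+2)}$. Solving the resulting fast-convergence threshold $\mathcal A_1 Y_0^{\delta_1}\lesssim 1$ for $M$ then gives $M\gtrsim \big(C_S\cdot\tfrac{1}{\min(t,\rho^2/K)}\big)^{(n+2)/4}Y_0^{1/2}$, i.e.\ the exponent on $C_S$ and on $\min(t,\rho^2/K)^{-1}$ is $\tfrac{n+2}{4}$. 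The theorem, and the paper's proof, have $\tfrac{n+2}{2p}$ here; since $p>\tfrac{n+2}{2}>2$, your exponent is strictly larger, so the bound you would obtain does not imply \eqref{rm-l-infty-lp-bd} (it is worse precisely as $t\to0$, which is the regime singled out in the Remark after Theorem~\ref{lp-l-infty-thm}). The same mismatch appears in the power of $\big(1+\iint_{Q_0}|Rm|^p\big)$: your route gives roughly $\tfrac{1}{p}$, not $\tfrac{n+4}{2p}$.

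The reason the paper's exponents are achievable is that it tests \eqref{rm-curv-eqn2} against $(v-k_{m+1})_+^{p-1}\phi_m^2$, not $(v-k_{m+1})_+\phi_m^2$, and iterates on $Y_m=\iint_{Q_m}(v-k_m)_+^p\,dv_0\,dt$. Then Chebyshev \eqref{measure-set-upper-bd} gives $|E_m|\le 2^{(m+1)p}k^{-p}Y_m$, and after the $\tfrac{2}{n+2}$-power in the Sobolev step the recursion \eqref{ym-ineqn1} carries $k^{-2p/(n+2)}$; solving for $k$ then yields the $(n+2)/(2p)$ and $(n+4)/(2p)$ exponents in \eqref{k-defn} and \eqref{rm-l-infty-lp-bd}. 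This choice also simplifies the nonlinear term: $\iint v^2(v-k_{m+1})_+^{p-1}\phi_m^2\le\big(\iint_{Q_0}v^{2p}\big)^{1/p}\big(\iint(v-k_{m+1})_+^p\phi_m^2\big)^{(p-1)/p}$ by a single H\"older with exponents $p$ and $p/(p-1)$, with no need for your three-factor H\"older, the extra super-level-set factor $|\{w>0\}|^{\mu}$, or the additional Young absorption (superlinearity then comes solely from the $|E_m|$ factor, with $\alpha=\tfrac{2}{n+2}-\tfrac1p>0$ giving the constraint $p>\tfrac{n+2}{2}$). To repair your argument, replace the test function $(u-k)_+\psi^2$ by $(u-k)_+^{p-1}\psi^2$ and the iterate $\iint(u-k_j)_+^2$ by $\iint(u-k_j)_+^p$; the rest of your outline (energy estimate, parabolic Sobolev, Chebyshev, fast-convergence lemma, $n=2$ reduction) then goes through and reproduces \eqref{rm-l-infty-lp-bd}--\eqref{rm-l-infty-lp-bd2}.
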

\begin{proof}
\noindent{\bf Case 1}: $n\ge 3$.

Let $v=|Rm|$, $0<t<T$ and $p>\frac{n+2}{2}$.
We will use a modification of the proof of Proposition 2.1 of \cite{DDD} to prove this theorem. 
By \eqref{ric-curv-bd-cond},
\begin{align}\label{vol-time-equiv}
&e^{-2Kt}g_{ij}(x,0)\le g(x,s)\le e^{2Kt}g_{ij}(x,0)\quad\forall x\in B_{g(0)}\left(x_0,\frac{2\rho}{\sqrt{K}}\right), 0\le s<T\notag\\
\Rightarrow\quad &e^{-nKt}dv_0\le dv_s\le e^{nKt}dv_0\qquad\qquad\quad\,\,\,\forall x\in B_{g(0)}\left(x_0,\frac{2\rho}{\sqrt{K}}\right), 0\le s\le t<T.
\end{align}
Let $\rho_m=\left(\rho/\sqrt{K}\right)(1+2^{-m})$  and $t_m=(1-2^{-m-1})t/2$ for any $m\ge 0$. Then $\rho_0=2\rho/\sqrt{K}$ and $t_0=t/4$. Moreover $\rho_m$ decreases to $\rho/\sqrt{K}$ and $t_m$ increases to $t/2$ as $m\to\infty$. Let $B_{\rho_m}=B_{g(0)}(x_0,\rho_m)$, $Q_m=B_{\rho_m}\times (t_m,t)$ and $Q_m^s=B_{\rho_m}\times (t_m,s)$ for any $t_m\le s\le t$. Then 
\begin{equation*}
B_{2\rho/\sqrt{K}}\times (t/4,t)=Q_0\supseteq Q_1\supseteq\cdots\supseteq Q_{m-1}\supseteq Q_m\supseteq\cdots\supseteq Q_{\infty}=B_{\rho/\sqrt{K}}\times (t/2,t).
\end{equation*}
We choose a sequence of Lipschitz continuous functions $\{\phi_m\}$ on $M\times (0,t)$ such that $0\le\phi_m\le 1$ on $M\times (0,t)$, $\phi_m(x,s)=1$ for $(x,s)\in Q_{m+1}$, $\phi_m(x,s)=0$ for $(x,s)\in M\times (0,t)\setminus Q_m$,  and satisfying 
\begin{equation}
\left\{\begin{aligned}
&|\nabla\phi_m|\le \frac{C\sqrt{K}2^m}{\rho}\quad\mbox{ in }Q_m\\
&0\le\phi_{m,t}\le \frac{C2^m}{t}\quad\mbox{ in }Q_m.
\end{aligned}\right.
\end{equation}
Let $k>0$ be a constant to be determined later and $k_m=k(1-2^{-m})$ for any $m\ge 0$. 
Multiplying \eqref{rm-curv-eqn2} by $(v-k_{m+1})_+^{p-1}\phi_m^2$ and integrating over $Q_m^s$, $t_m\le s\le t$,
\begin{align}\label{v-integral-ineqn}
&\frac{1}{p}\iint_{Q_m^s}\phi_m^2\frac{\1}{\1 t}(v-k_{m+1})_+^p\,dv_t\,dt\notag\\
&\quad +\iint_{Q_m^s}\nabla (v-k_{m+1})_+\cdot[(p-1)(v-k_{m+1})_+^{p-2}\phi_m^2
\nabla (v-k_{m+1})_++2(v-k_{m+1})_+^{p-1}\phi_m\nabla\phi_m]\,dv_t\,dt\notag\\
\le &C\iint_{Q_m^s}v^2(v-k_{m+1})_+^{p-1}\phi_m^2\,dv_t\,dt\notag\\
\le&C\left(\iint_{Q_0}v^{2p}\,dv_t\,dt\right)^{\frac{1}{p}}\left(\iint_{Q_m^s}(v-k_{m+1})_+^p\phi_m^2\,dv_t\,dt\right)^{\frac{p-1}{p}}.
\end{align}
Since $\frac{d}{dt}(dv_t)=-Rdv_t$, by \eqref{ric-curv-bd-cond},
\begin{align}\label{v-ineqn-1term}
&\iint_{Q_m^s}\phi_m^2\frac{\1}{\1 t}(v-k_{m+1})_+^p\,dv_t\,dt\notag\\
=&\int_{t_m}^s\frac{d}{dt}\left(\int_{B_{\rho_m}}(v-k_{m+1})_+^p\phi_m^2\,dv_t\right)\,dt-2\iint_{Q_m^s}(v-k_{m+1})_+^p\phi_m\phi_{m,t}\,dv_t\,dt\notag\\
&\qquad +\iint_{Q_m^s}(v-k_{m+1})_+^p\phi_m^2R\,dv_t\,dt\notag\\
\ge&\int_{B_{\rho_m}}(v(x,s)-k_{m+1})_+^p\phi_m(x,s)^2\,dv_s-C\frac{2^m}{t}\iint_{Q_m^s}(v-k_{m+1})_+^p\,dv_t\,dt\notag\\
&\qquad -CK\iint_{Q_m^s}(v-k_{m+1})_+^p\phi_m^2\,dv_t\,dt.
\end{align}
Since
\begin{equation*}
\int_{B_{\rho_m}}|\nabla(\phi_m(v-k_{m+1})_+^{\frac{p}{2}})|^2\,dv_t\le\frac{11}{10}\int_{B_{\rho_m}}\phi_m^2|\nabla (v-k_{m+1})_+^{\frac{p}{2}}|^2\,dv_t
+11\int_{B_{\rho_m}}(v-k_{m+1})_+^p|\nabla\phi_m|^2\,dv_t
\end{equation*}
and
\begin{align*}
&2\int_{B_{\rho_m}}\phi_m (v-k_{m+1})_+^{p-1}\nabla\phi_m\cdot\nabla (v-k_{m+1})_+\,dv_t\notag\\
=&\frac{4}{p}\int_{B_{\rho_m}}\phi_m (v-k_{m+1})_+^{\frac{p}{2}}\nabla\phi_m\cdot\nabla (v-k_{m+1})_+^{\frac{p}{2}}\,dv_t\\
\ge&-\frac{2}{p}\left(\frac{p-1}{p}\int_{B_{\rho_m}}\phi_m^2|\nabla (v-k_{m+1})_+^{\frac{p}{2}}|^2\,dv_t
+\frac{p}{p-1}\int_{B_{\rho_m}}(v-k_{m+1})_+^p|\nabla\phi_m|^2\,dv_t\right),
\end{align*}
\begin{align}\label{v-ineqn-2term}
&(p-1)\int_{B_{\rho_m}}\phi_m^2(v-k_{m+1})_+^{p-2}|\nabla (v-k_{m+1})_+|^2\,dv_t+2\int_{B_{\rho_m}}\phi_m (v-k_{m+1})_+^{p-1}\nabla\phi_m\cdot\nabla (v-k_{m+1})_+\,dv_t\notag\\
\ge&\frac{4(p-1)}{p^2}\int_{B_{\rho_m}}\phi_m^2|\nabla (v-k_{m+1})_+^{\frac{p}{2}}|^2\,dv_t\notag\\
&\qquad -\frac{2}{p}\left(\frac{p-1}{p}\int_{B_{\rho_m}}\phi_m^2|\nabla (v-k_{m+1})_+^{\frac{p}{2}}|^2\,dv_t
+\frac{p}{p-1}\int_{B_{\rho_m}} (v-k_{m+1})_+^p|\nabla\phi_m|^2\,dv_t\right)\notag\\
\ge&\frac{2(p-1)}{p^2}\left(\frac{10}{11}\int_{B_{\rho_m}}|\nabla((v-k_{m+1})_+^{\frac{p}{2}}\phi_m)|^2\,dv_t-10\int_{B_{\rho_m}}(v-k_{m+1})_+^p|\nabla\phi_m|^2\,dv_t\right)\notag\\
&\qquad -\frac{2}{p-1}\int_{B_{\rho_m}} (v-k_{m+1})_+^p|\nabla\phi_m|^2\,dv_t\notag\\
=&\frac{20(p-1)}{11p^2}\int_{B_{\rho_m}}|\nabla((v-k_{m+1})_+^{\frac{p}{2}}\phi_m)|^2\,dv_t-\frac{CK4^m}{\rho^2}\left(\frac{20(p-1)}{p^2}+\frac{2}{p-1}\right)\int_{B_{\rho_m}} (v-k_{m+1})_+^p\,dv_t.
\end{align}
By \eqref{v-integral-ineqn}, \eqref{v-ineqn-1term} and \eqref{v-ineqn-2term},
\begin{align}\label{v-ineqn2}
&\int_{B_{\rho_m}}v(x,s)^p\phi_m(x,s)^2\,dv_s+\iint_{Q_m^s}|\nabla((v-k_{m+1})_+^{\frac{p}{2}}\phi_m)|^2\,dv_t\,dt\notag\\
\le&C\left(\iint_{Q_0}v^{2p}\,dv_t\,dt\right)^{\frac{1}{p}}\left(\iint_{Q_m^s}(v-k_{m+1})_+^p\,dv_t\,dt\right)^{\frac{p-1}{p}}+C\left(K+\frac{2^m}{t}+\frac{K4^m}{\rho^2}\right)\iint_{Q_m^s}(v-k_{m+1})_+^p\,dv_t\,dt
\end{align}
By \eqref{vol-time-equiv} and \eqref{v-ineqn2},
\begin{align}\label{v-ineqn3}
&\sup_{t_m\le s\le t}\int_{B_{\rho_m}}v(x,s)^p\phi_m(x,s)^2\,dv_0+\iint_{Q_m}|\nabla((v-k_{m+1})_+^{\frac{p}{2}}\phi_m)|_{g(0)}^2\,dv_0\,dt\notag\\
\le&Ce^{CKt}\left\{A_1Y_m^{\frac{p-1}{p}}+\left(K+\frac{2^m}{t}+\frac{K4^m}{\rho^2}\right)Y_m\right\}
\end{align}
where
\begin{equation}\label{a1-defn}
A_1=\left(\iint_{Q_0}v^{2p}\,dv_0\,dt\right)^{\frac{1}{p}}
\end{equation}
and
\begin{equation*}
Y_m=\iint_{Q_m}(v-k_m)_+^p\,dv_0\,dt.
\end{equation*}
By Lemma \ref{sobolev-ineqn1},
\begin{align}\label{sobolev-ineqn}
\int_{B_{\rho_m}}|\nabla ((v-k_m)_+^{\frac{p}{2}}\phi_m)|_{g(0)}^2\,dv_0\ge &\frac{CV_{x_0}(\rho_m)^{\frac{2}{n}}}{\rho_m^2e^{C\rho_m\sqrt{K}}}\left(\int_{B_{\rho_m}}[(v-k_m)_+^{\frac{p}{2}}\phi_m]^{\frac{2n}{n-2}}\,dv_0\right)^{\frac{n-2}{n}}\notag\\
\ge&\frac{CKV_{x_0}(\rho/\sqrt{K})^{\frac{2}{n}}}{\rho^2e^{C\rho}}\left(\int_{B_{\rho_m}}[(v-k_m)_+^{\frac{p}{2}}\phi_m]^{\frac{2n}{n-2}}\,dv_0\right)^{\frac{n-2}{n}}.
\end{align}      
By the Holder inequality, 
\begin{align}\label{ym-ineqn10}
Y_{m+1}=&\iint_{Q_{m+1}}(v-k_{m+1})_+^p\,dv_0\,dt\notag\\
\le&\iint_{Q_m}(v-k_{m+1})_+^p\phi_m^2\,dv_0\,dt\notag\\
\le&\left(\iint_{Q_m}[(v-k_m)_+^p\phi_m^2]^{\frac{n+2}{n}}\,dv_0\,dt\right)^{\frac{n}{n+2}}|E_m|^{\frac{2}{n+2}}
\end{align}
where $E_m=\{(x,s)\in Q_m:v(x,s)>k_{m+1}\}$. By the Holder inequality and \eqref{sobolev-ineqn} (cf. proof of  proposition 3.1 of chapter 1 of \cite{D}),
\begin{align}\label{xt-sobolev-ineqn}
&\iint_{Q_m}[(v-k_m)_+^p\phi_m^2]^{\frac{n+2}{n}}\,dv_0\,dt\notag\\
=&\iint_{Q_m}[(v-k_m)_+^{\frac{p}{2}}\phi_m]^2\cdot[(v-k_m)_+^{\frac{p}{2}}\phi_m]^{\frac{4}{n}}\,dv_0\,dt\notag\\
\le&\int_{t_m}^t\left(\int_{B_{\rho_m}}[(v-k_m)_+^{\frac{p}{2}}\phi_m]^{\frac{2n}{n-2}}\,dv_0\right)^{\frac{n-2}{n}}\cdot\left(\int_{B_{\rho_m}}(v-k_m)_+^p\phi_m^2\,dv_0\right)^{\frac{2}{n}}  \,dt\notag\\
\le&\frac{C\rho^2e^{C\rho}}{KV_{x_0}(\rho/\sqrt{K})^{\frac{2}{n}}} 
\left(\iint_{Q_m}|\nabla ((v-k_m)_+^{\frac{p}{2}}\phi_m)|_{g(0)}^2\,dv_0\,dt\right)\cdot\left(
\sup_{t_n\le s\le t}\int_{B_{\rho_m}}(v-k_m)_+^p\phi_m^2\,dv_0\right)^{\frac{2}{n}}.
\end{align}
By \eqref{v-ineqn3}, \eqref{ym-ineqn10} and \eqref{xt-sobolev-ineqn},
\begin{equation}\label{ym-ineqn6}
Y_{m+1}\le\frac{C\rho^{\frac{2n}{n+2}}e^{C(\rho+tK)}}{K^{\frac{n}{n+2}}V_{x_0}(\rho/\sqrt{K})^{\frac{2}{n+2}}}\left\{A_1Y_m^{\frac{p-1}{p}}+\left(K+\frac{2^m}{t}+\frac{K4^m}{\rho^2}\right)Y_m\right\}|E_m|^{\frac{2}{n+2}}.
\end{equation}
Now (cf. proof on P.645 of \cite{DDD}), 
\begin{align}\label{measure-set-upper-bd}
Y_m=&\iint_{Q_m}(v-k_m)_+^p\,dv_0\,dt\ge(k_{m+1}-k_m)^p|E_m|=\frac{k^p}{2^{(m+1)p}}|E_m|\notag\\
\Rightarrow\quad|E_m|\le&\frac{2^{(m+1)p}}{k^p}Y_m.
\end{align}
Hemce by \eqref{ym-ineqn6} and \eqref{measure-set-upper-bd}, 
\begin{align}\label{ym-ineqn1}
Y_{m+1}\le&\frac{C\rho^{\frac{2n}{n+2}}e^{C(\rho+tK)}}{K^{\frac{n}{n+2}}V_{x_0}(\rho/\sqrt{K})^{\frac{2}{n+2}}}\left(\frac{2^{mp}}{k^p}\right)^{\frac{2}{n+2}}\left\{A_1Y_m^{1+\frac{2}{n+2}-\frac{1}{p}}+\left(K+\frac{2^m}{t}+\frac{K4^m}{\rho^2}\right)Y_m^{1+\frac{2}{n+2}}\right\}\notag\\
\le&\frac{C_1(A_1t+Kt+1)\rho^{\frac{2n}{n+2}}e^{C(\rho+tK)}}{k^\frac{2p}{n+2}K^{\frac{n}{n+2}}V_{x_0}\left(\rho/\sqrt{K}\right)^{\frac{2}{n+2}}\min(t,\rho^2/K)}
\cdot b^m\max\left(Y_m^{1+\alpha},Y_m^{1+\frac{2}{n+2}}\right)\quad\forall m\ge 0
\end{align}
for some constant $C_1>0$ where $b=4\cdot 2^{\frac{2p}{n+2}}$ and $\alpha=\frac{2}{n+2}-\frac{1}{p}$. Then $0<\alpha<\frac{2}{n+2}$. We now let $\beta>1/\alpha$ and
\begin{equation}\label{k-defn}
k=\left\{\frac{C_1(A_1t+Kt+1)\rho^{\frac{2n}{n+2}}e^{C(\rho+tK)}b^{\beta}}{K^{\frac{n}{n+2}}V_{x_0}\left(\rho/\sqrt{K}\right)^{\frac{2}{n+2}}\min(t,\rho^2/K)}
\right\}^{\frac{n+2}{2p}}\left(1+\int_{t/4}^t\int_{B_{2\rho/\sqrt{K}}}v^p\,dv_0\,dt\right)^{\frac{n+4}{2p}}.
\end{equation}
We claim that 
\begin{equation}\label{ym-ineqn2}
Y_{m+1}\le b^{-\frac{(\alpha\beta-1)[(1+\alpha)^{m+1}-(1+\alpha)]}{\alpha^2}-\frac{m}{\alpha}}<1\quad\forall m\ge 2.
\end{equation}
In order to  prove this claim we observe first that by   
\eqref{ym-ineqn1} and \eqref{k-defn},
\begin{equation}\label{y1-ineqn}
Y_1\le b^{-\beta}<1.
\end{equation}
By \eqref{ym-ineqn1}, \eqref{k-defn} and \eqref{y1-ineqn},
\begin{equation}\label{ym-ineqn3}
Y_2\le\frac{C_1(A_1t+Kt+1)\rho^{\frac{2n}{n+2}}e^{C(\rho+tK)}}{k^\frac{2p}{n+2}K^{\frac{n}{n+2}}\min(t,\rho^2/K)}
\cdot bY_1^{1+\alpha}\le b^{1-\beta-\beta (1+\alpha)}<1.
\end{equation}
Repeating the above argument we get,
\begin{equation*}
Y_{m+1}\le b^{\sum_{i=0}^mi(1+\alpha)^{m-i}-\beta\sum_{i=0}^m(1+\alpha)^i}
=b^{-\frac{(\alpha\beta-1)[(1+\alpha)^{m+1}-(1+\alpha)]}{\alpha^2}-\frac{m}{\alpha}}\quad\forall m\ge 2
\end{equation*}
and \eqref{ym-ineqn2} follows. Letting $m\to\infty$ in \eqref{ym-ineqn2}, 
\begin{equation*}
\lim_{m\to\infty}Y_{m+1}=0.
\end{equation*}
Hence $v\le k$ in $Q_{\infty}$ with $k$ given by \eqref{k-defn} and $A_1$ given by \eqref{a1-defn}. Thus \eqref{rm-l-infty-lp-bd} follows.

\noindent{\bf Case 2}: $n=2$.

Let $\4{M}=M\times\R$, $\4{g}(x,t)=g(x,t)+dx^2$, and let $\4{Rm}$, $\4{Ric}$, $\4{R}$, be the Riemannian curvature, Ricci curvature and scalar curvature of $(\4{M},\4{g}(t))$. Then
\begin{equation}\label{rm-ricci-scalar-curva-equiv}
|\4{Rm}|(x,y,t)=|Rm|(x,t), \quad |\4{Ric}|(x,y,t)=|Ric|(x,t),\quad \4{R}(x,y,t)=R(x,t)
\end{equation}
for all $x\in M$, $y\in\R$, $0\le t<T$ and
\begin{equation*}
\frac{\1 \4{g}_{ij}}{\1 t}=-2\4{R}_{ij}\quad\mbox{ in }(0,T).
\end{equation*}
Let $\4{V}_{x_0}(r)=\mbox{vol}_{\4{g}(0)}\,(B_{\4{g}(0)}((x_0,0),r))$ for any $r>0$ and $d\4{v}_0=dv_0\,dy$ be the volume element of $\4{g}(0)$. 
By case 1,
\begin{align}\label{rm-l-infty-lp-bd3}
|\4{Rm}|(x,y,t)
\le&C_0\left\{\frac{\rho^{\frac{6}{5}}e^{C(\rho+tK)}}{K^{\frac{3}{5}}\4{V}_{(x_0,0)}\left(\rho/\sqrt{K}\right)^{\frac{2}{5}}\min(t,\rho^2/K)}
\left[\left(\left(\iint_{\4{Q}_0}|Rm|^{2p}\,d\4{v}_0\,dt\right)^{\frac{1}{p}} +K\right)t+1\right]\right\}^{\frac{5}{2p}}\cdot\notag\\
&\qquad\cdot\left(1+\iint_{\4{Q}_0}|Rm|^p\,d\4{v}_0\,dt\right)^{\frac{7}{2p}}
\end{align}
holds for any $(x,y)\in B_{\4{g}(0)}\left((x_0,0),\rho/\sqrt{K}\right)$ and $0<t<T$ where $\4{Q}_0=B_{\4{g}(0)}\left((x_0,0),2\rho/\sqrt{K}\right)\times (t/4,t)$. Since $B_{\4{g}(0)}\left((x_0,0),\rho/\sqrt{K}\right)\supset B_{g(0)}\left(x_0,\rho/\sqrt{2K}\right)\times \left(-\rho/\sqrt{2K},\rho/\sqrt{2K}\right)$,
\begin{equation}\label{vol-ineqn}
\4{V}_{(x_0,0)}\left(\rho/\sqrt{K}\right)\ge\left(\sqrt{2}\rho/\sqrt{K}\right)V_{x_0}\left(\rho/\sqrt{2K}\right).
\end{equation}
Since $B_{\4{g}(0)}\left((x_0,0),2\rho/\sqrt{K}\right)\subset B_{g(0)}\left(x_0,2\rho/\sqrt{K}\right)\times \left(-2\rho/\sqrt{K},2\rho/\sqrt{K}\right)$,
by \eqref{rm-ricci-scalar-curva-equiv}, \eqref{rm-l-infty-lp-bd3} and \eqref{vol-ineqn}, we get \eqref{rm-l-infty-lp-bd2} and the theorem follows. 
\end{proof}

\begin{rmk}
By Proposition \ref{lp-integral-bd-prop}, Theorem \ref{rm-curvature-l-infty-lp-bd-thm}, Holder's inequality and \eqref{vol-time-equiv}, Theorem \ref{lp-l-infty-thm} follows.
\end{rmk}


\begin{thebibliography}{99}

\bibitem[A]{A} M.T.~Anderson, {\em Convergence and rigidity of manifolds under Ricci curvature bounds}, Invent. Math. 102 (1990), 429--445.

\bibitem[CK]{CK} B.~Chow and D.~Knopf, {\em The Ricci flow: An introduction}, Mathematical surveys and monographs vol. 110, Amer. Math. Soc., Providence, R.I., USA 2004.  

\bibitem[CLN]{CLN} B.~Chow, P.~Lu and L.~Ni, {\em Hamilton's Ricci flow}, Graduate studies in mathematics vol. 77, Amer. Math. Soc./Science Press, Providence, R.I., USA 2006. 

\bibitem[C]{C} C.W.~Chen, {\em Shi-type estimates of the Ricci flow based on Ricci curvature}, arxiv:1602.01939v2.

\bibitem[DWY]{DWY} X.~Dai, G.~Wei and R.~Ye, {\em Smoothing Riemannian metrics with Ricci curvature bounds}, Manuscripta Math. 90 (1996), 49 - 61.

\bibitem[DDD]{DDD} S.H.~Davis, E.~Dibenedetto and D.J.~Diller, {\em Some a priori estimates for a singular evolution equation arising in thin-film dynamics}, SIAM J. Math. Anal. 27 (1996), no. 3, 638--660.

\bibitem[D]{D} E.~Dibenedetto, {\em Degenerate parabolic equations}, Universitext series, Springer-Verlag, New York, USA 1993.

\bibitem[GT]{GT} D.~Gilbarg and D.S.~Trudinger, {\em Elliptic partial differential equations of second order}, Grundlehren der mathematischen Wissenschaften 224, 2nd ed., Springer-Verlag, Berlin Heidelberg 1983.

\bibitem[H1]{H1} R.~Hamilton, {\em Three-manifolds with positive curvature}, J. Differential Geom. 17 (1982), no. 2, 255--306.

\bibitem[H2]{H2} R.~Hamilton, {\em The formation of singularities in the Ricci flow}, in Survey in differential geometry vol. 2, Cambridge MA, USA 1995, 7--136. 

\bibitem[L]{L} P.~Li, {\em Geometric Analysis}, Cambridge studies in advanced mathematics vol. 134, Cambridge University Press, Cambridge 2012.

\bibitem[KMW]{KMW} B.~Kotschwar, O.~Munteanu, J.~Wang, {\em A local curvature estimate for the Ricci flow}, J. Functional Analysis 271 (2016), no. 9, 2604--2630.

\bibitem[MF]{MF} J.W.~Morgan and Frederick T.H.~Fong, {\em Ricci flow and geometrization of 3-manifolds}, University lecture series vol. 53, Amer. Math. Soc., Providence, R.I., U.S.A. 2010.

\bibitem[MT]{MT} J.W.~Morgan and G.~Tian, {\em Ricci flow and the Poincare Conjecture}, Clay Mathematics Monographs
vol. 3, Amer. Math. Soc., Providence, R.I., U.S.A. 2007.

\bibitem[S]{S} N.~Sesum, {\em Curvature tensor under the Ricci flow}, Amer J. Math. 127 (2005), no. 6, 1315--1324.

\bibitem[P1]{P1} G.~Perelman, {\em The entropy formula for the Ricci flow and its
geometric applications}, http://arXiv.org/abs/math.DG/0211159.

\bibitem[P2]{P2} G.~Perelman, {\em Ricci flow with surgery on three-manifolds},
http://arXiv.org\linebreak/abs/math.DG/0303109.

\bibitem[Sh1]{Sh1} W.X.~Shi, {\em Deforming the metric on complete Riemannian
manifolds}, J. Differential Geom.{\bf 30}, (1989), 223--301.

\bibitem[Sh2]{Sh2} W.X.~Shi, {\em Ricci deformation of the metric on complete
non-compact Riemannian manifolds}, J. Differential Geom. {\bf 30}, (1989),
303--394.

\bibitem[Y1]{Y1} R.~Ye, {\em Curvature estimates for the Ricci flow I}, Michigan Math. J. 59 (2010), no. 2, 410-433. 

\bibitem[Y2]{Y2} R.~Ye, {\em Curvature estimates for the Ricci flow II}, Calc. Var. and Partial Differential Equations 31 (2008), 439--455.

\end{thebibliography}
\end{document}